\gdef\n@te#1#2{\leavevmode\vadjust{%
 {\setbox\z@\hbox to\z@{\strut#1}%
  \setbox\z@\hbox{\raise\dp\strutbox\box\z@}\ht\z@=\z@\dp\z@=\z@%
  #2\box\z@}}}
\gdef\leftnote#1{\n@te{\hss#1\quad}{}}
\gdef\rightnote#1{\n@te{\quad\kern-\leftskip#1\hss}{\moveright\hsize}}
\gdef\?{\FN@\qumark}
\gdef\qumark{\ifx\next"\DN@"##1"{\leftnote{\rm##1}}\else
 \DN@{\leftnote{\rm??}}\fi{\rm??}\next@}}
\DeclareFontFamily{OT1}{wncyr}{\hyphenchar\font45 }
\DeclareFontShape{OT1}{wncyr}{m}{n}{%
   <5> <6> <7> <8> <9> gen * wncyr
   <10> <10.95> <12> <14.4> <17.28> <20.74>  <24.88>wncyr10}{}
\DeclareFontShape{OT1}{wncyr}{m}{it}{%
   <5> <6> <7> <8> <9> gen * wncyi
   <10> <10.95> <12> <14.4> <17.28> <20.74> <24.88> wncyi10}{}
\DeclareFontShape{OT1}{wncyr}{m}{sc}{%
   <5> <6> <7> <8> <9> <10> <10.95> <12> <14.4>
   <17.28> <20.74> <24.88>wncysc10}{}
\DeclareFontShape{OT1}{wncyr}{b}{n}{%
   <5> <6> <7> <8> <9> gen * wncyb
   <10> <10.95> <12> <14.4> <17.28> <20.74> <24.88>wncyb10}{}
\theoremstyle{plain}
\newtheorem{theorem}{Theorem}
\newtheorem{lemma}{Lemma}
\newtheorem{remark}{\bf Remark}
\theoremstyle{definition}
\newtheorem{nothing*}[theorem]{}
\newtheorem{subnothing*}[sub]{}
\theoremstyle{remark}
\def\bA1{{\mathbf A}\!^1}
\def\P1{{\bf P}^1}
 \newcommand{\bwedge}{\mbox{\large{$\wedge$}}}
\begin{document}

\title[Number of components of the nullcone]
{Number of components of\\ the nullcone}

\author[Vladimir L. Popov]{Vladimir L. Popov${}^*$}
\address{Steklov Mathematical Institute,
Russian Academy of Sciences, Gub\-kina 8, Moscow\\
119991, Russia}

\email{popovvl@mi.ras.ru}

\thanks{${}^*$\,This work is supported by the RSF under a grant 14-50-00005.}

\begin{abstract} For every pair $(G, V)$ where $G$ is a connected simple linear algebraic group and  $V$ is a simple algebraic $G$-module with a free algebra of invariants, the number of irreducible components of the nullcone of unstable vectors in $V$ is found.
\end{abstract}

\maketitle

{\bf 1.} We fix as the base field an algebraically closed field $k$ of characteristic zero.\;Below the standard notation and terminology of the theory of algebraic groups and invariant theory  \cite{PV} are used freely.

Consider a finite dimensional vector space  $V$ over the field $k$ and a connected semisimple algebraic subgroup $G$ of the group ${\rm GL}(V)$.
 Let $\pi_{G, V}\colon V\to V/\!\!/G$ be the categorical quotient for the action of  $G$ on $V$, i.e.,  $V/\!\!/G$ is the irreducible affine algebraic variety with the coordinate algebra
$k[V]^G$ and the morphism $\pi_{G, V}^{\ }$ is determined by the identity embedding $k[V]^G \hookrightarrow k[V]$.\;Denote by
${\mathcal N}_{G, V}$ the nullcone of the $G$-module $V$, i.e., the fiber $\pi_{G, V}^{-1}(\pi_{G, V}(0))$
of the morphism $\pi_{G, V}^{\ }$.\;A point of the space $V$ lies in ${\mathcal N}_{G, V}$
if and only if its $G$-orbit is nilpotent, i.e., contains in its closure the zero of the space $V$ (see\;\cite[5.1]{PV}).

This article owes its origin to the following A.\;Joseph's question
 \cite{J}: may it happen that the nullcone ${\mathcal N}_{G, V}$ is reducible if the group $G$ is simple,
 its natural action on $V$ is irreducible, and the algebra of invariants $k[V]^G$ is free?

 Pairs $(G, V)$ with a free algebra of invariants  $k[V]^G$
 have been stu\-di\-ed intensively in the $70$s of the last century
  (see\;\cite{PV}, \cite{P5} and the literature cited there). Under the assumptions of simplicity of the group $G$ and irreducibility of its action on $V$ they are completely classified and constitute a remarkable class which admits
  a number of other important characterizations.

In Theorem \ref{main} proved below we find the number of irreducible compo\-nents of the nullcone
${\mathcal N}_{G, V}$ for every pair $(G, V)$ from this class. As a corollary we obtain the affirmative answer to
A.\;Joseph's question. The proof is based on the aforementioned classification and characterizati\-ons
that are reproduced below in Theorems
\ref{list} and\;\ref{prop}.

\vskip 2mm

{\bf 2.} Up to conjugacy in ${\rm GL}(V)$, the group $G$ is uniquely determined as the image of a representation $\widetilde G\to {\rm GL}(V)$
of its universal covering group $\widetilde G$.\;The equivalence class on this representation, if it is irreducible, is  uniquely determined by its highest weight $\lambda$ (with respect to a fixed maximal torus and a Borel subgroup of the group $\widetilde G$ containing this torus). With this in mind, we shall write
$G=({\sf R}, \lambda)$, where ${\sf R}$ is the type of the root system of the group
$G$. Note that  $({\sf R}, \lambda)=({\sf R}, \lambda^*)$, where $\lambda^*$ is the highest weight of the dual representation. We denote by $\varpi_1, \ldots, \varpi_r$ the fundamental weights of the group
$\widetilde G$ numbered as in Bourbaki
 \cite{B}.\;If ${\sf R}={\sf A}_r, {\sf B}_r, {\sf C}_r, {\sf D}_r$, then we assume that, respectively, $r\geqslant 1, 3, 2, 4$.

The following theorem is proved in \cite{KPV}:

\begin{theorem} \label{list}
All connected nontrivial simple algebraic subgroups $G$ of the group ${\rm GL}(V)$
that act on $V$ irreducibly and have a free algebra of invariants $k[V]^G$, are exhausted by the following list:
\begin{enumerate}[\hskip 4.2mm\rm(i)]
\item {\rm(}adjoint groups{\rm):}
\begin{equation*}
\begin{gathered}
({\sf A}_r, \varpi_1+\varpi_r);  ({\sf B}_r, \varpi_2); ({\sf D}_r,\varpi_2); ({\sf C}_r, 2\varpi_1);\\
({\sf E}_6, \varpi_2), ({\sf E}_7, \varpi_1); ({\sf E}_8, \varpi_8); ({\sf F}_4, \varpi_1); ({\sf G}_2, \varpi_2) \end{gathered}
\end{equation*}
\item {\rm(}isotropy groups of symmetric spaces{\rm):}
\begin{equation*}
\begin{gathered}
({\sf B}_r, \varpi_1); ({\sf D}_r, \varpi_1); ({\sf A}_3, \varpi_2);
({\sf A}_1, 2\varpi_1);\\
({\sf B}_r, 2\varpi_1); ({\sf D}_r, 2\varpi_1); ({\sf A}_3, 2\varpi_2); ({\sf C}_2, 2\varpi_1); ({\sf A}_1, 4\varpi_1);\\
({\sf C}_r, \varpi_2); ({\sf A}_7, \varpi_4); ({\sf B}_4, \varpi_4); ({\sf C}_4, \varpi_4);
({\sf D}_8, \varpi_8); ({\sf F}_4, \varpi_4);
\end{gathered}
\end{equation*}
\item {\rm(}groups $G$ with $k[V]^G=k${\rm):}
\begin{equation*}
\begin{gathered}
({\sf A}_r, \varpi_1); ({\sf A}_r, \varpi_2), r\geqslant 4 \mbox{ \it even\,}; ({\sf C}_r, \varpi_1);
({\sf D}_5, \varpi_5);
\end{gathered}
\end{equation*}
\item {\rm(}groups $G$ with $Õ{\rm tr\,deg}k[V]^G=1$ not included in {\rm(i)} and {\rm (ii)}{\rm):}
\begin{equation*}
\begin{gathered}
({\sf A}_r, 2\varpi_1), r\geqslant 2;  ({\sf A}_r, \varpi_2), r\geqslant 5 \mbox{ \it odd\,};\\
({\sf A}_1, 3\varpi_1); ({\sf A}_5, \varpi_3); ({\sf A}_6, \varpi_3);  ({\sf A}_7, \varpi_3);\\
({\sf B}_3, \varpi_3); ({\sf B}_5, \varpi_5); ({\sf C}_3, \varpi_3); ({\sf D}_6, \varpi_6); ({\sf D}_7, \varpi_7);\\ ({\sf G}_2, \varpi_1); ({\sf E}_6, \varpi_1);
({\sf E}_7, \varpi_7);
\end{gathered}
\end{equation*}
\item {\rm(}other groups{\rm):}
\begin{equation*}
\begin{gathered}
({\sf A}_2, 3\varpi_1); ({\sf A}_8,\varpi_3); ({\sf B}_6, \varpi_6).
\end{gathered}
\end{equation*}
\end{enumerate}
\end{theorem}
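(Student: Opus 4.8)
The plan is to split the statement into its two halves and attack them by completely different means: proving that every pair on the list does have a free algebra of invariants is a structural matter, while proving that \emph{no} pair outside the list qualifies is a bounded classification search governed by numerical necessary conditions.

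For the first half I would dispose of the bulk of the list by invoking known structure theory rather than computing invariants by hand. The pairs in (i) are adjoint representations $G\subset{\rm GL}(\mathfrak g)$, for which the Chevalley restriction theorem gives $k[\mathfrak g]^G\cong k[\mathfrak t]^W$, a polynomial algebra on $r$ generators, so (i) is settled at once. The pairs in (ii) are isotropy representations of symmetric spaces, so Vinberg--Kostant--Rallis theory applies: restriction to a Cartan subspace $\mathfrak a$ identifies $k[V]^G$ with $k[\mathfrak a]^{W'}$ for the little Weyl group $W'$, which acts as a reflection group, whence polynomiality follows from Chevalley--Shephard--Todd. The pairs in (iii) have $k[V]^G=k$ and are free trivially. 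This leaves only the finitely many sporadic pairs of (iv) and (v), for which I would exhibit an explicit generating system and check, using the known dimension of the quotient, that the generators are algebraically independent and of the correct number.

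For the converse (exhaustiveness) the essential tool is a numerical criterion sharp enough to leave only finitely many candidates $({\sf R},\lambda)$. If $k[V]^G=k[f_1,\dots,f_m]$ is free with $\deg f_i=d_i$, then $m=\dim V/\!\!/G=\operatorname{trdeg}_k k[V]^G=\dim V-\delta$, where $\delta$ is the dimension of a generic $G$-orbit; since $\mathcal N_{G,V}$ is cut out set-theoretically by the $f_i$, one has $\dim\mathcal N_{G,V}\ge\dim V-m=\delta$, with equality exactly in the flat (cofree) case. On the other side, the Hilbert series of $k[V]^G$ is computed by the Molien--Weyl integration formula, and freeness forces
\[
H\bigl(k[V]^G,t\bigr)\;=\;\int_{K}\frac{d\mu(g)}{\det\bigl(\mathrm{Id}-t\,\rho(g)\bigr)}\;=\;\prod_{i=1}^{m}\frac{1}{1-t^{d_i}},
\]
where $\rho$ is the representation and $K$ the compact form with Haar measure $\mu$. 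Matching the order of the pole at $t=1$, the asymptotics, and the rationality of the right-hand side against the explicit integral pins down both $m$ and the multiset $\{d_i\}$, and together with the control of $\delta$ (hence of the generic stabilizer) this makes $\dim V$ grow far faster than the invariant-theoretic room available once the coefficients of $\lambda$ or the rank $r$ exceed small explicit thresholds. The problem then reduces to a finite list of $({\sf R},\lambda)$, each tested by checking whether the numerical identities above can hold.

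I expect the main obstacle to be precisely this reduction to finiteness and the borderline cases it leaves. Establishing the growth estimates demands sharp, uniform control of $\dim\mathcal N_{G,V}$ and of the generic stabilizer across all types at once, and the generic-stabilizer data alone is a substantial ingredient. Worse, near the threshold the necessary conditions are met by several \emph{near-miss} representations whose Hilbert series and dimensions are consistent with freeness but whose quotients are in fact singular; these cannot be eliminated by dimension counting and require finer arguments, such as computing the true degrees of a minimal generating set or exhibiting a relation among invariants. Organizing these eliminations uniformly, especially for the exceptional types and the low-rank coincidences where several candidate families collide, is where the genuine difficulty lies.
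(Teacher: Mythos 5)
You should first be aware that the paper itself offers no proof of this theorem: it is quoted verbatim from the classification of Kac, Popov and Vinberg \cite{KPV}, so there is no internal argument to compare against. Judged on its own terms, the first half of your plan (every listed pair is free) is sound and essentially the standard route: Chevalley restriction for (i), Kostant--Rallis for (ii), triviality for (iii); for (iv) you do not even need explicit generators, since a graded normal domain of transcendence degree $1$ with no nonconstant units is automatically $k[f]$ (this is in effect the argument of Lemma \ref{<2} of the paper); and the three pairs in (v) are $\theta$-groups in Vinberg's sense, so freeness comes from that theory rather than from hand computation.

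The genuine gap is in the exhaustiveness half, which is the actual content of the theorem. The Molien--Weyl identity $H(k[V]^G,t)=\prod_i(1-t^{d_i})^{-1}$ is a test you can apply to one candidate $({\sf R},\lambda)$ at a time; it is not, as written, a mechanism that bounds $r$ or the coefficients of $\lambda$, because you have no a priori control of $m$ or of the $d_i$ as $\lambda$ grows, and the assertion that the pole order and asymptotics ``make $\dim V$ grow far faster than the invariant-theoretic room available'' is exactly the statement you would need to prove and have not. The reductions actually used in the literature go through sharper structural consequences of freeness --- e.g.\ the inequality $\sum_i(d_i-1)\leqslant\dim V-\dim V/\!\!/G$ coming from generic independence of the $df_i$, or the nontriviality of the generic stabilizer together with the Andreev--Vinberg--Elashvili classification of irreducible representations with that property (this is the content of the equivalences in Theorem \ref{prop}). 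Without importing one of these, your candidate list is not finite, and you yourself flag that the borderline and low-rank cases remain unresolved; so what you have is a credible program, not a proof.
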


\begin{remark}\label{rem1} {\rm There are no repeated groups inside each of these five lists (i)--(v). The unique group included in two different lists  (namely, in (i) and (ii)) is $({\sf A}_1, 2\varpi_1)$.
The groups $G$
with $Õ{\rm tr\,deg}k[V]^G=1$ included in at least one of the lists (i), (ii) are
$({\sf B}_r, \varpi_1)$, $({\sf D}_r, \varpi_1)$, $({\sf A}_3, \varpi_2)$,
$({\sf C}_2, \varpi_2)$,
$({\sf A}_1, 2\varpi_1)$, $({\sf B}_4, \varpi_4)$
and only these groups.}
\end{remark}

{\bf 3.} Recall from \cite[3.8, 8.8]{PV}, \cite[Chap.\,5, \S\,1, 11]{P5}, \cite{P4}
that an algeb\-ra\-ic subvariety
$S$ in $V$ is called  {\it a Chevalley section with the Weyl group} $W(S):=N(S)/Z(S)$, where $N(S):=\{g\in G\mid g\cdot S=S\}$ and $Z(S):=\{g\in G\mid g\cdot s=s \;\forall s\in S\}$, if the homomorphism of
$k$-algebras
$k[V]^G\to k[S]^{W(S)}, f\mapsto f|_S,$ is an isomorphism.\;A linear subvariety in
$V$ that is a Chevalley section with trivial Weyl group
(i.e., a linear subvariety intersecting every fiber of the morphism
 $\pi_{G, V}^{\ }$ at a single point) is called
{\it a Weierstrass section}. A linear subspace in $V$ that is a Chevalley section with a finite Weyl group is called {\it a Cartan subspace}.

Recall  also (see \cite[Thm.\;3.3 and Cor.\;4 of Thm.\;2.3]{PV}) that semisimp\-li\-ci\-ty of the group $G$ implies the equality
\begin{equation}\label{max}
m_{G, V}^{\ }:=\max_{v\in V} \dim G\cdot v=\dim V-\dim V/\!\!/G.
\end{equation}

Consider the following properties:

\begin{enumerate}[\hskip 6.2mm\rm(i)]
\item[(FA)] $k[V]^G$ is a free $k$-algebra;
\item[(FM)] $k[V]$ is a free $k[V]^G$-module;
\item[(ED)] all fibers of the morphism $\pi_{G, V}^{\ }$ have the same dimension;
\item[(${\rm ED}_0$)]
$\dim {\mathcal N}_{G, V}=m_{G, V}^{\ }$
(see\;\eqref{max});
\item[(FO)] every fiber of the morphism $\pi_{G, V}^{\ }$ contains only finitely many $G$-orbits;
\item[$({\rm FO}_0)$] ${\mathcal N}_{G, V}$ contains only finitely many $G$-orbits;
\item[(NS)] $G$-stabilizers of points in general position in $V$ are nontrivial;
    \item[(CS)] there is a Cartan subspace in $V$;
        \item[(WS)] there is a Weierstrass section in $V$.
\end{enumerate}
The following implications between them hold true:
\begin{align*}
{\rm (FM)}&\Leftrightarrow {\rm (FA)}\&{\rm(ED)}\;\;\mbox{(see\,\cite[p.\,127, Thm.\,1]{P5});}\\
({\rm ED}_0)&\Leftrightarrow {\rm  (ED)}\Leftarrow ({\rm FO}_0)\;\;\mbox{(see\,\cite[p.\,128, Thm.\,3, Cor.]{P5});}\\
({\rm FO}_0)&\Leftrightarrow {\rm (FO)}\;\;\mbox{(see\,\cite[Cor.\,3 of Prop.\,5.1]{PV});}\\
{\rm (CS)} &\Rightarrow {\rm (FM)}\Leftarrow {\rm (WS)}\;\;\mbox{(see\,\cite[p.\,133, Thm.\,7]{P5})}.
\end{align*}

\begin{theorem}\label{prop} For the connected simple algebraic subgroups $G$ in ${\rm GL}(V)$, acting on $V$ irreducibly, all nine properties $\rm (FA)$, $\rm (FM)$, $\rm (ED)$,
$({\rm ED}_0)$,
$\rm (FO)$, $({\rm FO}_0)$, ${\rm (NS)}$, $\rm (CS)$, and $\rm (WS)$
are equivalent\footnote{In \cite[p.\;207, Thm.]{MFK}, the property
    (NS) is replaced by the property that the $G$-stabilizer of {\it every} point of $V$ is nontrivial.\;It is a mistake: for instance, the
    ${\rm SL}_2$-module of binary forms in $x$ and $y$ of degree $3$ has the property (FA), but the ${\rm SL}_2$-stabilizer of the form $x^2y$ is trivial.}.
\end{theorem}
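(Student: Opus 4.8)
The plan is to regard the nine properties as the vertices of an implication graph and to collapse them into a single equivalence class, letting the four lines of implications recorded above do as much of the work as possible. Those implications already stratify the conditions: $\mathrm{(CS)}$ and $\mathrm{(WS)}$ are the strongest, each entailing $\mathrm{(FM)}$; just below them sit $\mathrm{(FO)}\Leftrightarrow(\mathrm{FO}_0)$, entailing $\mathrm{(ED)}$; and at the bottom lie the weak conditions $\mathrm{(FA)}$ and $\mathrm{(ED)}\Leftrightarrow(\mathrm{ED}_0)$ — recall $\mathrm{(FM)}\Leftrightarrow\mathrm{(FA)}\&\mathrm{(ED)}$ — together with the as-yet unconnected $\mathrm{(NS)}$. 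The whole theorem therefore comes down to a handful of ``upward'' implications that promote a weak hypothesis to a strong conclusion.

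More precisely, I claim it is enough to prove the six implications $\mathrm{(FA)}\Rightarrow\mathrm{(CS)}$, $\mathrm{(FA)}\Rightarrow\mathrm{(WS)}$, $\mathrm{(FA)}\Rightarrow\mathrm{(FO)}$, $\mathrm{(FA)}\Rightarrow\mathrm{(NS)}$, $\mathrm{(NS)}\Rightarrow\mathrm{(FA)}$ and $\mathrm{(ED)}\Rightarrow\mathrm{(FA)}$. Indeed $\mathrm{(FA)}\Rightarrow\mathrm{(CS)}\Rightarrow\mathrm{(FM)}\Rightarrow\mathrm{(FA)}$ makes $\mathrm{(FA)}$, $\mathrm{(CS)}$, $\mathrm{(FM)}$ mutually equivalent, whence $\mathrm{(FA)}\Rightarrow\mathrm{(FM)}\Rightarrow\mathrm{(ED)}\Leftrightarrow(\mathrm{ED}_0)$, and combined with $\mathrm{(ED)}\Rightarrow\mathrm{(FA)}$ this pulls $\mathrm{(ED)}$, $(\mathrm{ED}_0)$ into the class; likewise $\mathrm{(WS)}\Rightarrow\mathrm{(FM)}$ with $\mathrm{(FA)}\Rightarrow\mathrm{(WS)}$ gives $\mathrm{(WS)}\Leftrightarrow\mathrm{(FA)}$; the chain $\mathrm{(FA)}\Rightarrow\mathrm{(FO)}\Leftrightarrow(\mathrm{FO}_0)\Rightarrow\mathrm{(ED)}\Rightarrow\mathrm{(FA)}$ does the same for $\mathrm{(FO)}$, $(\mathrm{FO}_0)$; and the last two implications give $\mathrm{(NS)}\Leftrightarrow\mathrm{(FA)}$. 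Thus the task reduces to the single informal assertion that, for simple $G$ acting irreducibly, freeness of $k[V]^G$, equidimensionality of $\pi_{G,V}$, and nontriviality of the generic stabilizer are one and the same condition, which moreover forces a Cartan subspace, a Weierstrass section, and finiteness of the orbit count in fibers.

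The main obstacle is exactly this bundle of upward implications: none of them survives for a general reductive group — the footnote's binary-cubics example already shows that $\mathrm{(FA)}$ does not force universal triviality of stabilizers, and $\mathrm{(ED)}\Rightarrow\mathrm{(FA)}$ and $\mathrm{(NS)}\Rightarrow\mathrm{(FA)}$ genuinely fail without simplicity and irreducibility — so the structure of simple irreducible modules must enter essentially. My route is to invoke the classification of Theorem~\ref{list}: since the pairs with $\mathrm{(FA)}$ form the explicit finite list there, I verify the strong properties entry by entry. The adjoint representations of type (i) are handled uniformly — a Cartan subalgebra of $\mathfrak g$ is the Cartan subspace, Kostant's section furnishes $\mathrm{(WS)}$, the Jordan decomposition together with finiteness of the nilpotent orbits gives $\mathrm{(FO)}$, and the centralizer of a regular semisimple element, a maximal torus, is the nontrivial generic stabilizer. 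The isotropy representations of symmetric spaces in (ii) are polar in the sense of Dadok and Kac, where a maximal abelian subspace of the isotropy module is the Cartan subspace and the Kostant--Rallis restriction theorem yields $\mathrm{(CS)}$ and $\mathrm{(FO)}$ at once; the finitely many sporadic pairs of (iii)--(v) are checked individually from the tables of \cite{KPV} and \cite{P5}. For the converse implications $\mathrm{(ED)}\Rightarrow\mathrm{(FA)}$ and $\mathrm{(NS)}\Rightarrow\mathrm{(FA)}$ I would use the classification contrapositively: every simple irreducible module \emph{not} on the list has a non-equidimensional quotient morphism and a trivial generic stabilizer, so neither $\mathrm{(ED)}$ nor $\mathrm{(NS)}$ can hold there.

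I expect two delicate points. First, the existence of a \emph{Weierstrass} section — a Chevalley section with trivial Weyl group, strictly more than the Cartan subspace supplied by polarity — is the subtlest of the constructions, and is where I would lean most heavily on the explicit sections of \cite{P4} and \cite{P5}. Second, making the case analysis uniform rather than a long sequence of ad hoc computations is the real bookkeeping burden. The conceptual payoff, and the reason all nine conditions can coincide at all, is the fact — special to simple irreducible modules — that the classification of free-invariant pairs in Theorem~\ref{list} agrees term for term with the classification of polar representations and with that of representations having a nontrivial generic stabilizer.
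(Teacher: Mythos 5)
Your logical reduction to the six implications is valid, and your plan for the four implications out of $\mathrm{(FA)}$ --- verifying $\mathrm{(CS)}$, $\mathrm{(WS)}$, $\mathrm{(FO)}$, $\mathrm{(NS)}$ entry by entry on the finite list of Theorem \ref{list}, via Kostant's section for the adjoint modules and Kostant--Rallis for the symmetric-space isotropy modules --- is workable. The genuine gap is in the two converse implications $\mathrm{(NS)}\Rightarrow\mathrm{(FA)}$ and $\mathrm{(ED)}\Rightarrow\mathrm{(FA)}$. You propose to ``use the classification contrapositively,'' asserting that every simple irreducible module not in the list of Theorem \ref{list} has a non-equidimensional quotient morphism and a trivial generic stabilizer. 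But Theorem \ref{list} classifies only the pairs with property $\mathrm{(FA)}$; it says nothing about which pairs satisfy $\mathrm{(ED)}$ or $\mathrm{(NS)}$, so no contrapositive reading of it can yield the assertion you need. What is actually required --- and what the paper invokes --- are the separate, independently obtained complete classifications of the simple irreducible modules with equidimensional quotient (\cite{P1}, \cite[p.\,141, Thm.\,8]{P5}) and of those with nontrivial generic stabilizer (\cite{AVE}, \cite{AP}, \cite{AP1}, \cite{AP2}); the theorem is then proved by observing that these lists coincide with the $\mathrm{(FA)}$ list of \cite{KPV}. Without citing (or reproving) these classifications, your implications (5) and (6) are unsupported.

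A secondary remark: the paper does not construct Cartan subspaces or Weierstrass sections case by case; it cites \cite[p.\,142, Thm.\,9]{P5}, which establishes that each of $\mathrm{(CS)}$ and $\mathrm{(WS)}$ is equivalent to $\mathrm{(ED)}$ for these modules. So the ``delicate point'' you flag about Weierstrass sections is already packaged in a single cited theorem, and your case-by-case construction, while not wrong in principle, is more work than the paper's route of simply matching the five published lists and appealing to that equivalence.
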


\begin{proof} The complete list of the groups $G$ having the pro\-per\-ty (FA) is obtained in \cite{KPV}; the one having the property (ED) is obtained in \cite{P1}, \cite[p.\;141, Thm.\,8]{P5} and, in the same papers, that having the property (FM);
the one having the property (FO) is obtained in \cite{K}.
The results of papers \cite{AVE}, \cite{AP}, \cite{AP1}, \cite{AP2}
yield the complete list of the groups $G$ having the property (NS). Matching the obtained lists proves the equivalence of the properties  (FA), (FM), (ED), (FO), and
(NS) (see\;\cite[Thm.\;8.8]{PV} and \cite[p.\;127, Thm.\,1]{P5}).
It is proved in \cite[p.\;142, Thm.\,9]{P5} that each of the properties   (CS) and (WS) is equivalent to the property (ED).
\end{proof}

\begin{remark} {\rm The conditions of simplicity of the group $G$ and irreduci\-bi\-li\-ty of its action on $V$
in Theorem \ref{prop} are essential, see\;\cite{P4}.}
\end{remark}

{\bf 4.} Now we turn to finding the number of irreducible components of the nullcone ${\mathcal N}_{G, V}$.

\begin{lemma}\label{<2} If $\dim V/\!\!/G\leqslant 1$, then the nullcone ${\mathcal N}_{G, V}$ is irreducible.
If $\dim V/\!\!/G=0$, then it contains an open dense $G$-orbit.
\end{lemma}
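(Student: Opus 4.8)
The plan is to analyze the two cases separately, using the general formula $\dim\mathcal N_{G,V}=m_{G,V}=\dim V-\dim V/\!\!/G$ from \eqref{max}, which holds since $G$ is semisimple. The key observation is that the nullcone always contains the single closed orbit $\{0\}$, and that all its irreducible components, being $G$-stable closed cones, must have dimension equal to $m_{G,V}$ because the property $({\rm ED}_0)$ holds: indeed, by Theorem \ref{prop} all the listed properties are equivalent, so a group in the class under consideration (free algebra of invariants) automatically satisfies $({\rm ED}_0)$. Thus every component of $\mathcal N_{G,V}$ has dimension exactly $m_{G,V}=\dim V-\dim V/\!\!/G$.

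First I would treat the case $\dim V/\!\!/G=0$. Here $k[V]^G=k$, so $\pi_{G,V}$ maps $V$ to a point and $\mathcal N_{G,V}=V$; since $V$ is irreducible, the nullcone is irreducible. Moreover $m_{G,V}=\dim V$, which means there is a dense $G$-orbit in $V$: a general orbit has dimension equal to $\dim V$, hence is open, and being a single orbit it is dense. This gives both assertions of the lemma in this case at once.

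Next I would treat $\dim V/\!\!/G=1$. By $({\rm FO}_0)$, equivalent to (FA) via Theorem \ref{prop}, the nullcone contains only finitely many $G$-orbits; in particular it is a finite union of orbit closures. Every irreducible component is therefore the closure of a single $G$-orbit and has dimension $m_{G,V}=\dim V-1$. To show irreducibility I would argue that there can be only one such component. The natural approach is to show $\mathcal N_{G,V}$ is connected and then that a connected, equidimensional nullcone with a component structure coming from orbit closures cannot split. More concretely, I would use that $\mathcal N_{G,V}$ is the zero fiber of the single nonconstant homogeneous generator $f$ of $k[V]^G\cong k[f]$, so that $\mathcal N_{G,V}=V(f)$ is a hypersurface. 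A hypersurface in the affine space $V$ defined by a single polynomial $f$ is equidimensional of dimension $\dim V-1$, consistent with the above, and its irreducible components correspond to the distinct irreducible factors of $f$. Thus irreducibility of $\mathcal N_{G,V}$ reduces to showing that $f$ is, up to scalar, a power of a single irreducible polynomial.

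The main obstacle, and the heart of the argument, is precisely this last reduction: proving that the generator $f$ has only one irreducible factor. I expect to handle it by exploiting $G$-invariance: the group $G$ permutes the irreducible factors of the $G$-invariant polynomial $f$, and since $G$ is connected it must fix each factor (up to scalar), so each factor is itself a $G$-semi-invariant. Because $G$ is semisimple it has no nontrivial characters, whence each irreducible factor is in fact $G$-invariant. But $k[V]^G=k[f]$ is generated by $f$ alone, so any invariant factor must be a scalar multiple of a power of $f$; comparing degrees forces $f$ to be irreducible (or a power of an irreducible that is itself invariant, again reducing to one component). This shows $V(f)$ is irreducible, completing the case $\dim V/\!\!/G=1$ and hence the lemma.
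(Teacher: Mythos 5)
Your treatment of the case $\dim V/\!\!/G=0$ coincides with the paper's, and your endgame for the case $\dim V/\!\!/G=1$ (the nullcone is the hypersurface $V(f)$, and $f$ is irreducible because a connected semisimple group permutes, hence fixes, the irreducible factors of an invariant and admits no nontrivial characters) is exactly the content the paper delegates to \cite[Thm.\;3.17]{PV}; spelling it out is fine. The genuine difference, and the one real gap, is how you get to ``$k[V]^G\cong k[f]$'' in the first place. The lemma is stated for an arbitrary connected semisimple $G\subseteq{\rm GL}(V)$ with $\dim V/\!\!/G\leqslant 1$; it does not assume $G$ simple, the action irreducible, or property (FA), so you are not entitled to invoke Theorem \ref{prop} to conclude that $k[V]^G$ is a polynomial ring in one generator (nor to use $({\rm ED}_0)$ or $({\rm FO}_0)$ -- though those detours in your first and third paragraphs are in any case superfluous once you have the hypersurface description). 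The paper closes precisely this point by a geometric argument: $V/\!\!/G$ is normal (quotient of a smooth variety), rational by L\"uroth since $\pi_{G,V}$ is dominant and $V$ is rational, hence a smooth rational affine curve, i.e.\ an open subset of $\mathbf A^1$; and since every unit of $k[V]$, hence of $k[V]^G$, is constant, it is all of $\mathbf A^1$, giving $k[V]^G=k[f]$ with $f(0)=0$. If you only care about the groups to which the lemma is actually applied in Theorem \ref{main} (which all satisfy (FA)), your shortcut is harmless; as a proof of the lemma as stated, you need to substitute the normality--L\"uroth--units argument for the appeal to Theorem \ref{prop}.
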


\begin{proof}
The equality $\dim V/\!\!/G=0$
 means that $\dim V/\!\!/G$ is a single point.\;By the definition of the nullcone, the latter condition is equivalent to the equality   ${\mathcal N}_{G, V}=V$. In particular, in this case the nullcone ${\mathcal N}_{G, V}$ is irreducible.\;On the other hand, in view of  \eqref{max}, the equality $\dim V/\!\!/G=0$
is equivalent
to that  $V$ contains a $G$-orbit of dimension $\dim V$, i.e., an open and dense orbit.

In view of smoothness of  $V$, the algebraic variety $V/\!\!/G$ is normal (see.\,\cite[Thm.\;3.16]{PV}).
Let $\dim\,V/\!\!/G=1$. It follows from rationality of the algebraic variety $V$,
dominance of the morphism $\pi_{G, V}^{\ }$, and L\"uroth's theorem that the curve
$V/\!\!/G$ is rational.\;Being normal, it is smooth. Hence  $V/\!\!/G$ is isomorphic to an open subset of the affine line.\;Since every invertible element of the algebra $k[V]$ is a constant,
the algebra  $k[V]^G$ has the same property. Hence the curve $V/\!\!/G$ is isomorphic to the affine line,
and therefore, there is a polynomial $f\in k[V]^G$ such that $f(0)=0$ and $k[V]^G=k[f]$.\;Since the group $G$ is connected and has no nontrivial characters, the polynomial $f$ is irreducible (see\;\cite[Thm.\;3.17]{PV}).\;Since
${\mathcal N}_{G, V}=\{v\in V\mid f(v)=0\}$, this implies irreducibili\-ty of the nullcone
${\mathcal N}_{G, V}$.
\end{proof}

\begin{theorem}\label{main} The nullcone ${\mathcal N}_{G, V}$ of the connected nontrivial simple algebraic group  $G\subseteq {\rm GL}(V)$ acting irreducibly on $V$ and having the equivalent properties listed in Theorem {\rm \ref{prop}} is reducible if and only if  $G$ is contained in the following list:
\begin{equation}\label{exc}
({\sf D}_r, 2\varpi_1), ({\sf A}_3, 2\varpi_2), ({\sf A}_7, \varpi_4).
\end{equation}
For every group $G$ from list  {\rm \eqref{exc}}, the number of irreducible components of the nullcone ${\mathcal N}_{G,V}$ is equal to $2$.
\end{theorem}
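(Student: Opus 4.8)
The plan is to convert the problem into counting the top\nobreakdash-dimensional orbits in $\mathcal N_{G,V}$ and then to run that count against the classification of Theorem~\ref{list}. By Lemma~\ref{<2} every pair with $\dim V/\!\!/G\leqslant 1$ has an irreducible nullcone; by Theorem~\ref{list} and Remark~\ref{rem1} these comprise all of lists (iii), (iv) together with the pairs of (i), (ii) singled out in Remark~\ref{rem1}, leaving only finitely many pairs with $\dim V/\!\!/G\geqslant 2$ to examine. For these, set $m:=\dim V/\!\!/G$ and write $k[V]^G=k[f_1,\dots,f_m]$ (property (FA)). Property (FM) makes $k[V]$ a free $k[V]^G$-module, so $f_1,\dots,f_m$ is a regular sequence and $\mathcal N_{G,V}=\{f_1=\dots=f_m=0\}$ is a complete intersection; by $({\rm ED}_0)$ it has pure dimension $\dim V-m=m_{G,V}$. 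Since $({\rm FO}_0)$ makes $\mathcal N_{G,V}$ a finite union of $G$-orbits, each orbit of dimension $m_{G,V}$ has an $m_{G,V}$-dimensional irreducible closure and hence is an irreducible component; conversely a component, being $m_{G,V}$-dimensional, cannot be covered by orbits of smaller dimension and so carries a unique dense orbit of dimension $m_{G,V}$. Thus the number of irreducible components equals the number of $G$-orbits of maximal dimension $m_{G,V}$ in $\mathcal N_{G,V}$.

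For the pairs of list (i) the module $V$ is the adjoint module $\mathfrak g$, so $\mathcal N_{G,V}$ is the nilpotent cone of $\mathfrak g$, which is irreducible (the closure of the regular nilpotent orbit). Hence list (i) produces no reducible nullcone, and I may discard it.

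The pairs of lists (ii) and (v) are isotropy representations $(K,\mathfrak p)$ of symmetric spaces, respectively Vinberg $\theta$-representations, and for these I would count the maximal nilpotent orbits using Kostant--Rallis theory and the Vinberg--Elashvili classification. The family $({\sf D}_r,2\varpi_1)$ together with its $r=3$ avatar $({\sf A}_3,2\varpi_2)=({\sf D}_3,2\varpi_1)$ is $\mathrm{SO}_{2r}$ acting by conjugation on traceless symmetric $2r\times 2r$ matrices; here the nilpotent $\mathrm O_{2r}$-orbits correspond to partitions of $2r$, and the unique top one is the regular orbit, with partition $(2r)$. Since all parts of $(2r)$ are even, its $\mathrm O_{2r}$-stabilizer lies in $\mathrm{SO}_{2r}$, so under the connected group $\mathrm{SO}_{2r}$ it breaks into two orbits of the same maximal dimension, producing two components. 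By contrast $({\sf B}_r,2\varpi_1)$ is $\mathrm{SO}_{2r+1}$ on symmetric matrices with top partition $(2r+1)$, which is odd and does not split, so its nullcone is irreducible; this is why only even orthogonal groups occur in \eqref{exc}. For $({\sf A}_7,\varpi_4)$, the isotropy representation of $(\mathfrak e_7,\mathfrak{sl}_8)$ on $\mathfrak p=\Lambda^4 k^8$, I would extract from the Vinberg--Elashvili tables of nilpotent orbits that exactly two orbits attain the maximal dimension $m_{G,V}=63$. For each remaining pair of (ii) and (v) the same orbit data exhibit a single maximal nilpotent orbit, so the nullcone is irreducible.

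The main obstacle lies in this last step. Two points require genuine work: first, the splitting statement for the symmetric-matrix family --- that the regular, all-parts-even orbit breaks into exactly two $\mathrm{SO}_{2r}$-orbits and that no other orbit reaches dimension $m_{G,V}$ (the low-rank cases already display the phenomenon: the nilpotent traceless symmetric $2\times 2$ matrices $\left(\begin{smallmatrix} a & b\\ b & -a\end{smallmatrix}\right)$ satisfy $a^2+b^2=0$ and form the two isotropic lines $a=\pm i b$); and second, the exceptional pair $({\sf A}_7,\varpi_4)$, where one must verify by explicit computation with trivectors of $k^8$ that precisely two nilpotent orbits are of top dimension. Confirming that every other pair on the list carries a unique top-dimensional orbit is routine but lengthy, and is what ultimately pins down \eqref{exc} as the complete list.
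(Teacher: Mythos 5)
Your reduction is the paper's: pure-dimensionality of $\mathcal N_{G,V}$ (you get it from the complete-intersection property via (FM), the paper from the fiber-dimension theorem plus $({\rm ED}_0)$ --- either works) combined with $({\rm FO}_0)$ forces the irreducible components to be exactly the closures of the $m_{G,V}$-dimensional nilpotent orbits. Lists (i), (iii), (iv) and the low-rank pairs of Remark \ref{rem1} are then dispatched exactly as in the paper. Where you genuinely diverge is list (ii): the paper treats every symmetric pair uniformly by passing through the Kostant--Sekiguchi-type bijection $\mathscr O\mapsto\mathscr O'$ to nilpotent orbits of the corresponding real form $\mathfrak s$ and counting maximal-dimensional orbits of \emph{compact} elements (equivalently, $(-1)$-distinguished $K$-orbits) in the Popov--Tevelev tables; this yields $\mathfrak{sl}_n(\mathbb R)$ ($1$ or $2$ according to the parity of $n$), $\mathfrak{sl}_r(\mathbb H)$, ${\rm E}_{6(6)}$, ${\rm E}_{8(8)}$, ${\rm E}_{6(-26)}$ (each giving $1$) and ${\rm E}_{7(7)}$ (giving $2$). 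Your direct partition argument for the $\mathfrak{so}_n$-family --- regular partition $(n)$, splitting under $\mathrm{SO}_n$ precisely when all parts are even --- is a more elementary, self-contained substitute for the $\mathfrak{sl}_n(\mathbb R)$ entry of those tables, and it correctly isolates $({\sf D}_r,2\varpi_1)$ and $({\sf A}_3,2\varpi_2)$ while excluding the odd orthogonal cases.

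The one place where your plan as written would not go through is the exceptional cases, and in particular the third reducible pair $({\sf A}_7,\varpi_4)$. The Vinberg--Elashvili classification concerns trivectors of a \emph{nine}-dimensional space, i.e.\ $\wedge^3k^9$ for $({\sf A}_8,\varpi_3)$ of list (v); it says nothing about $\wedge^4k^8$, and ``trivectors of $k^8$'' is $\wedge^3k^8=({\sf A}_7,\varpi_3)$, a different module with $\dim V/\!\!/G=1$ already covered by Lemma \ref{<2}. For $({\sf A}_7,\varpi_4)$ the paper identifies the real form as ${\rm E}_{7(7)}$ and reads off exactly two $(-1)$-distinguished orbits of maximal dimension $63$ from the Popov--Tevelev tables; you would need either that route or a classification of $4$-vectors of an $8$-dimensional space. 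Likewise list (v) is not a uniform table lookup in the paper: $({\sf A}_2,3\varpi_1)$ is settled by exhibiting $\mathcal N_{G,V}=G\cdot L$ via the Hilbert--Mumford criterion, and $({\sf B}_6,\varpi_6)$ by the Gatti--Viniberghi classification of spinors in dimension $13$. Since these verifications are precisely where the theorem's content lies, the deferred ``routine but lengthy'' step is the real remaining work; the architecture, however, is the paper's.
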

\begin{proof}
From Theorem \ref{prop} we obtain the following interpre\-ta\-ti\-on of the number of irreducible components of the nullcone ${\mathcal N}_{G,V}$. Using
 \eqref{max} and  the fiber dimension theorem
  (see\;\cite[Chap.\;II, \S 3]{H}), we infer that dimension of every irreducible component of the nullcone
   ${\mathcal N}_{G,V}$ is at least $m_{G, V}^{\ }$.\;This and the property
$({\rm ED}_0)$ imply that dimension of every irreducible component of the nullcone ${\mathcal N}_{G,V}$
is equal to $m_{G, V}^{\ }$. But in view of the property $({\rm FO}_0)$ every irreducible component of the nullcone ${\mathcal N}_{G,V}$ is the closure of some  $G$-orbit.\;Hence the number of irreducible components of the nullcone ${\mathcal N}_{G,V}$ is equal to the number of
$m_{G, V}^{\ }$-dimensional nilpotent $G$-orbits in $V$.

Now we shall use Theorem \ref{list} and find, for every group $G$ listed in it, the number of irreducible components of the nullcone ${\mathcal N}_{G,V}$.

\vskip 1mm

1. If the group $G$ is adjoint, then according to \cite[Cor.\;5.5]{K}, the nullcone ${\mathcal N}_{G,V}$ is irreducible.\;This conclusion covers all the groups $G$ from list (i) of Theorem\;\ref{list}.

\vskip 1mm

2. In view of Lemma \ref{<2}, the nullcone ${\mathcal N}_{G,V}$ is irreducible
for all the groups $G$ from lists (iii) and (iv) of Theorem \ref{list} and also for the groups with
${\rm tr\hskip .25mm deg}_kk[V]^G=1$ mentioned in Remark \ref{rem1}.

\vskip 1mm

3. Consider all the groups $G$ from list (v) of Theorem\;\ref{list}.

\vskip 1mm
(3a) The orbits of the group $({\sf A}_2, 3\varpi_1)$ are the orbits of the natural action of the group
${\rm SL}_3$ on the space of cubic forms in three variables. According to \cite[5.4, Example $2^\circ$]{PV},
the Hilbert--Mumford criterion imp\-lies the existence of a linear subspace $L$ in $V$ such that
 ${\mathcal N}_{G,V}=G\cdot L$. Hence the nullcone
 ${\mathcal N}_{G,V}$ is irreducible.

\vskip 1mm
(3b)  The orbits of the group $({\sf A}_8, \varpi_3)$
 are the orbits of the natural action of the group
 ${\rm SL}_9$ on the space of $3$-vectors
  $\bwedge^3 k^9$. The classifi\-ca\-ti\-on of them is obtained in \cite{VE}; it shows  (see\;\cite[Table\;6, $\dim S=0$]{VE}) that in this case there is a unique nilpotent orbit of dimension   $m_{G, V}^{\ }=80$. Hence the nullcone
${\mathcal N}_{G,V}$ is irreducible.

\vskip 1mm
(3c) The orbits of the group $({\sf B}_6, \varpi_6)$
 are the orbits of the natural action of the group
 ${\rm Spin}_{13}$ on the space of spinor
 representation. The classifi\-ca\-ti\-on of them is obtained in  \cite{KV}; it shows (see\;\cite[Thm.\;1(3)]{KV})
  that in this case there is a unique nilpotent orbit of dimension
  $m_{G, V}^{\ }=62$, and hence the nullcone
${\mathcal N}_{G,V}$ is irreducible.

\vskip 1mm

4. Let us now consider all the groups  $G$ from the remaining list (ii) of Theorem\;\ref{list}. By virtue of the Lefschetz principle, we may (and shall) assume that
 $k=\mathbb C$.\;All these groups are obtained by means of the following general construction.

Consider a semisimple complex Lie algebra $\mathfrak h$, it adjoint group  ${\rm Ad}\,\mathfrak h$, and an involution
$\theta\in {\rm Aut}\,\mathfrak h$. The decomposition
\begin{equation*}
\mathfrak h=\mathfrak k\oplus \mathfrak p, \;\;\mbox{ where }\;\mathfrak k:=\{x\in \mathfrak h\mid \theta(x)=x\},\; \mathfrak p:=\{x\in \mathfrak h\mid \theta(x)=-x\}.
\end{equation*}
is a  $\mathbb Z_2$-grading of the Lie algebra  $\mathfrak h$, and $\mathfrak k$ is its proper reductive subalgebra (see\;\cite{VO}).\;Let $K$ be the connected algebraic subgroup of
${\rm Ad}\,\mathfrak h$
with the Lie algebra $\mathfrak k$.\;The subspace $\mathfrak p$ is invariant with respect to the restriction to $K$ of the natural action of the group ${\rm Ad}\,\mathfrak h$ on $\mathfrak h$.
The action of  $K$ on $\mathfrak p$ arising this way determines a homomorphism
$\iota\colon K\to {\rm GL}(\mathfrak p).
$

For every group from list  (ii) of Theorem\;\ref{list}, there is a pair $({\mathfrak h, \theta})$ such that
$V=\mathfrak p$ and $G=\iota(K)$.

Next, we use the following facts (see \cite{KR}, \cite{CM}, \cite{VO}, \cite{PT}).

 In $\mathfrak h$, there is a $\theta$-stable real form $\mathfrak r$ of the Lie algebra $\mathfrak h$, such that
$
\mathfrak r= (\mathfrak r\cap \mathfrak k)
\oplus (\mathfrak r\cap \mathfrak p)$ is its Cartan
decomposition
 (thereby $\mathfrak r\cap \mathfrak k$ is a compact real form of the Lie algebra $\mathfrak k$).\;The semisimple real Lie algebra $\mathfrak r$ is noncompact and
the juxtaposition $\mathfrak r \rightsquigarrow \theta$
determines a bijections between
the noncompact real forms of the Lie algebra
 $\mathfrak h$, considered up to an isomorphism, and the involutions in
${\rm Aut}\,\mathfrak h$, considered up to conjugation.
By means of this bijection and described construction,
every group
$G$ from list (ii) of Theorem\;\ref{list} is determined by
some noncompact semi\-simp\-le real Lie algebra
 $\mathfrak s$; we say that $G$ and $\mathfrak s$ {\it correspond} each other.

The nullcone
${\mathcal N}_{K,\mathfrak p}$ for the action of  $K$ on $\mathfrak p$ contains only finitely many  $K$-orbits,
therefore,  every its irreducible component contains an open dense  $K$-orbit;
the latter is called  {\it principal} nilpotent $K$-orbit and its dimension is equal to the maximum of dimensions of
$K$-orbits in $\mathfrak p$.

Let
$\sigma\colon \mathfrak h\to \mathfrak h,\;x+iy\mapsto x-iy,\;x, y\in {\mathfrak r}$.\;Denote by ${\mathcal N}_{\mathfrak r}$ the set of nilpotent elements of the Lie algebra
$\mathfrak r$.\;In every nonzero $K$-orbit $\mathscr O\subset{\mathcal N}_{K,\mathfrak p}$,
 there is an element $e$ such that $\{e, f:=-\sigma(e), h:=[e, f]\}$ is an ${\mathfrak sl}_2$-triple (i.e., $[h, e]=2e$ and $[h, f]=-2f$).\;Then the element $(i/2)(e+f-h)$ lies in ${\mathcal N}_{\mathfrak r}$, its ${\rm Ad}\,\mathfrak r$-orbit ${\mathscr O}'$
does not depend on the choice of  $e$, the equality
$2\dim_{\mathbb C} \mathscr O=\dim_{\mathbb R}\mathscr O'$ holds, and the map
 $\mathscr O\mapsto \mathscr O'$ is a bijection between the set
 of nonzero
 $K$-orbits in ${\mathcal N}_{K,\mathfrak p}$ and the set of nonzero
 ${\rm Ad}\,\mathfrak r$-orbits in
${\mathcal N}_{\mathfrak r}$.

A nilpotent element of a real semisimple Lie algebra
$\mathfrak s$
is called {\it compact} if the reductive Levi factor of its centralizer in
 $\mathfrak s$ is a compact Lie algebra, \cite{PT}.\;For all simple real
 Lie algebras $\mathfrak s$ and their compact elements  $x$, the orbits $({\rm Ad}\,\mathfrak s)\cdot x$
are classified  (and their dimensions are found) in \cite{PT}. If, in the above notation,
the elements of an  ${\rm Ad}\,\mathfrak r$-orbit $\mathscr O'$ are compact, then the $K$-orbit $\mathscr O$ is called $(-1)$-{\it distinguished}, \cite{P6}. All principal nilpotent
$K$-orbits are $(-1)$-distinguished, \cite{P3}.

It follows from the aforesaid
that the number of irreducible compo\-nents of the nullcone
 ${\mathcal N}_{K,\mathfrak p}$ is equal to the number of $(-1)$-distinguished
 $K$-orbits of maximal dimension in $\mathfrak p$, and also to the number
of orbits
$({\rm Ad}\,\mathfrak r) \cdot x$ of maximal dimension, where $x$ is a compact element in
$\mathfrak r$.

This reduces the problem to pointing out for every group $G$ from list (ii) of Theorem \ref{list}
the simple real Lie algebra  $\mathfrak s$ corresponding to it, and then to finding the number of orbits
$({\rm Ad}\,\mathfrak s) \cdot x$, where $x$ is a compact element of  $\mathfrak s$,
such that their dimension is maximal.

Now we shall perform this for every group from list  (ii) of Theorem \ref{list},
except those from Remark \ref{rem1} that have already been considered above.

     \vskip 1mm
(4a) Let $G$ be one of the groups $({\sf B}_r, 2\varpi_1)$, $({\sf D}_r, 2\varpi_1)$, $({\sf A}_3, 2\varpi_2)$, $({\sf C}_2, 2\varpi_1)$, $({\sf A}_1, 4\varpi_1)$.\;Therefore,   $\mathfrak k=\mathfrak{so}_n$, where, respectively, $n=2r+1$ (with $r\geqslant 3$), $2r$ (with $r\geqslant 4$), $6$, $5$, $3$.\;Hence the maximal compact subalgebra in  $\mathfrak s$
is $\mathfrak {so}_{n,0}$ (see\;\cite{VO}, \cite{CM}, \cite[Table\;1]{PT}).\;In this case,
$\mathfrak s$ is a real form of the Lie algebra $\mathfrak {sl}_n$
(see Summary Table at the end of \cite{PV} and Tables 7, 9 in Reference Chapter
of \cite{VO}).  It follows from this and Table 8  in Reference Chapter
of  \cite{VO}
that $\mathfrak s={\mathfrak {sl}}_{n}(\mathbb R)$.\;According to \cite[Thm.\;8]{PT}, the number of orbits
 $({\rm Ad}\,\mathfrak s)\cdot x$, where $x$ is a nonzero compact element of $\mathfrak s$, is equal to
$1$ if $n$ is add, and to $2$ if  $n$ is even, and in the case of even $n$ both of these orbits have the same dimension.  Therefore, the nullcone
${\mathcal N}_{G,V}$ is irreducible for odd  $n$ and has exactly two irreducible components
for even $n$.

\vskip 1mm
(4b)  Let $G=({\sf C}_r, \varpi_2)$. Therefore, $\mathfrak k={\mathfrak {sp}}_{2r}$, so the maximal compact
subalgebra in  $\mathfrak s$ is
${\mathfrak {sp}}_{r, 0}$ (see\;\cite{VO}, \cite{CM}, \cite[Table\;1]{PT}).\;In this case,
$\mathfrak s$ is a real form of the Lie algebra $\mathfrak {sl}_{2r}$
(see Summary Table at the end of  \cite{PV} and Tables 7, 9 in Reference Chapter
of \cite{VO}).\;It follows from this and Table 8  in Reference Chapter
of \cite{VO}
that $\mathfrak s={\mathfrak {sl}}_{r}({\mathbb H})$.\;According to \cite[Thm.\;8]{PT}, the number of orbits
 $({\rm Ad}\,\mathfrak s)\cdot x$, where $x$ is a nonzero compact element of $\mathfrak s$, is equal to $1$.\;Therefore, the nullcone
${\mathcal N}_{G,V}$ is irreducible.

\vskip 1mm

(4c) Let $G=({\sf A}_7, \varpi_4)$.\;Then $\mathfrak k={\mathfrak {sl}}_8$, so the maximal compact
subalgebra in  $\mathfrak s$ is
${\mathfrak {su}}_8$ (see\;\cite{VO}, \cite{CM}, \cite[Table\;1]{PT}).\;In this case,
$\mathfrak s$ is a real form of the Lie algebra ${\rm E}_7$
(see Summary Table at the end of \cite{PV} and Tables 7, 9 in Reference Chapter
of \cite{VO}). It follows from this and \cite[Table\;5]{PT} that, using E. Cartan's notation,  $\mathfrak s={\rm E}_{7(7)}$. According to \cite[Table\;12]{PT}, for this $\mathfrak s$, the number of
$(-1)$-distinguished $K$-orbits of maximal dimension $(=63)$ in
${\mathcal N}_{K,\mathfrak p}$ is equal to $2$. Therefore,
the number of irreducible componenets of the nullcone
${\mathcal N}_{G,V}$ is equal to $2$ as well.

\vskip 1mm
(4d) Let $G=({\sf C}_4, \varpi_4)$. Therefore, $\mathfrak k={\mathfrak {sp}}_8$, and hence
the maximal compact
subalgebra in  $\mathfrak s$ is
${\mathfrak {sp}}_{4, 0}$ (see\;\cite{VO}, \cite{CM}, \cite[Table\;1]{PT}).\;In this case,
$\mathfrak s$ is a real form of the Lie algebra ${\rm E}_6$
(see Summary Table at the end of \cite{PV} and Tables 7, 9 in Reference Chapter
of \cite{VO}).\;It follows from this and \cite[Table\;5]{PT} that $\mathfrak s={\rm E}_{6(6)}$.\;According to \cite[Table\;7]{PT}, for this $\mathfrak s$, there is a unique
$(-1)$-distinguished $K$-orbit of maximal dimension $(=36)$ in
${\mathcal N}_{K,\mathfrak p}$.
 Therefore, the nullcone
${\mathcal N}_{G,V}$ is irreducible.

\vskip 1mm
(4e) Let $G=({\sf D}_8, \varpi_8)$. Therefore, $\mathfrak k={\mathfrak {so}}_{16}$, so
the maximal compact
subalgebra in $\mathfrak s$ is
${\mathfrak {so}}_{16, 0}$ (see\;\cite{VO}, \cite{CM}, \cite[Table\;1]{PT}).\;In this case,
$\mathfrak s$ is a real form of the Lie algebra ${\rm E}_8$
(see Summary Table at the end of \cite{PV} and Tables 7, 9 in Reference Chapter
of  \cite{VO}). It follows from this and \cite[Table\;5]{PT} that $\mathfrak s={\rm E}_{8(8)}$.\;According to \cite[Table\;14]{PT}, for this $\mathfrak s$, there is a unique
$(-1)$-distinguished $K$-orbit of maximal dimension $(=129)$ in
${\mathcal N}_{K,\mathfrak p}$.
Hence  the nullcone
${\mathcal N}_{G,V}$ is irreducible.

\vskip 1mm
(4f) Let $G=({\sf F}_4, \varpi_4)$.\;Therefore,  $\mathfrak k={\mathfrak {f}}_{4}$, so
the maximal compact
subalgebra in $\mathfrak s$ is
${\rm{F}}_{4(-52)}$ (see\;\cite[Sect.\;5]{PT}).\;In this case,
$\mathfrak s$ is a real form of the Lie algebra ${\rm E}_8$
(see Summary Table at the end of \cite{PV} and Tables 7, 9 in Reference Chapter
of \cite{VO}).  It follows from this and \cite[Table\;5]{PT} that $\mathfrak s={\rm E}_{6(-26)}$.\;According to \cite[Table\;9]{PT}, for this $\mathfrak s$,
 there is a unique
$(-1)$-distinguished
$K$-orbit of maximal dimension $(=24)$ in ${\mathcal N}_{K,\mathfrak p}$.
Hence  in this case the nullcone
${\mathcal N}_{G,V}$ is irreducible.
\end{proof}

\begin{remark} {\rm In \cite{P2} is obtained an algorithm that employs only elemen\-ta\-ry geometric operations
  (the orthogonal projection of a finite system of points onto a linear subspace and taking its convex hull) and,
  starting from the system of weights of the $G$-module $V$ and the system of roots of the group $G$,
   finds a finite set of linear subspaces $L$ in $V$ such that the irreducible components of maximal dimension of the nullcone
  ${\mathcal N}_{G, V}$ are the varieties $G\cdot L$. In particular, if the property $({\rm ED}_0)$ holds
  (see above the list of properties after formula \eqref{max}), this algorithm describes all the irreducible components of the nullcone ${\mathcal N}_{G, V}$.\;For instance, this is so for every pair $(G, V)$ from Theorem \ref{list}.\;The computer implementation of this algorithm is obtained in \cite{ACP}.}
\end{remark}


\begin{thebibliography}{15}


\bibitem{ACP} N.\;A'Campo, V.\;L.\;Popov, {\it The computer algebra package HNC $($Hilbert Null Cone$)$}, {\tt http:$/\!\!/$www.geometrie.ch}, Math. Institut Universit\"at Ba\-sel, 2004.



\bibitem{AP}  E.\;M.\;Andreev, V.\;L.\;Popov, {\it Stationary subgroups of points in general position in the representation space of a semisimple Lie group},    Funkts. Anal. Prilozh. {\bf 5} (1971), no. 4, 1--8 (Russian). Engl. transl.: Funct. Anal. Appl. {\bf 5} (1972), 265--271.

       \bibitem{AVE} E.\;M.\;Andreev, E.\;B.\;Vinberg, A.\;G.\;Ehlashvili, {\it Orbits of highest
   dimension of semisimple linear Lie groups}, Funkts. Anal. Prilozh. {\bf 1} {\bf 1} (1967), no. 1, 3--7 (Russian). Engl. transl.: Funct. Anal. Appl. {\bf 1} (1968), 257--261.



       \bibitem{B} N. Bourbaki, {\it Groupes et Alg\`ebres de Lie}, Chap. IV, V, VI, Hermann, Paris, 1968.


\bibitem{CM} D.\;H.\;Collingwood, W.\;M.\;McGovern, {\it Nilpotent Orbits in Semisimple Lie Algebras}, Van Nostrand
    Reinhold, New York, 1992.

                 \bibitem{KV} V. Gatti, E. Viniberghi, {\it Spinors of $13$-dimensional space}, Adv. Math. {\bf 30} (1978), no. 2, 137--155.

                     \bibitem{H} R. Hartshorne, {\it Algebraic Geometry}, Graduate Text in Mathematics, Vol. 52, Springer-Verlag, New York, 1977.

                  \bibitem{J} A. Joseph, {\it Private communication}, March 2013.


         \bibitem{K}  V.\;G.\;Kac, {\it Concerning the question of description of the orbit space
         of a linear algebraic group}, Uspekhi Mat. Nauk {\bf 30} (1975), no. 6, 173--174 (Russian).

\bibitem{KPV} V. G. Kac, V. L. Popov, E. B. Vinberg, {\it Sur les groupes lin\'eaires alg\'ebriques dont l'alg\'ebre des invariants est libre}, C. R. Acad. Sci. Paris S\'er. A-B {\bf 283} (1976), no. 12, A875--A878.

    \bibitem{K} B.\;Kostant, {\it The principal three-dimensional subgroup and the Betti numbers of a complex simple Lie group}, Amer. J. Math. {\bf 81} (1959), 973--1032.

    \bibitem{KR} B.\;Kostant, S.\;Rallis, {\it Orbits and representations associated with symmetric spaces},
    Amer. J. Math. {\bf 93} (1971), 753--809.

\bibitem{MFK} D.\;Mumford, J.\;Fogarty, F. Kirwan, {\it Geometric Invariant Theory}, 3rd ed., Ergebnisse der Mathematik und ihrer Grenzgebiete, Vol. 34, Springer-Verlag, Berlin, 1994.

\bibitem{AP1}  A.\;M.\;Popov, {\it Irreducible simple linear Lie groups with finite stationary subgroups of general position},  Funkts. Anal. Prilozh. {\bf 9} (1975), no. 4, 81--82 (Russian). Engl. transl.:
Funct. Anal. Appl. {\bf 9} (1975), no. 4, 346--347.


 \bibitem{AP2} A.\;M.\;Popov, {\it Finite isotropy subgroups in general position in simple linear Lie groups},
 Tr. Mosk. Mat. Ob. {\bf 48} (1985), 7--59  (Russian). Engl. transl.: Trans. Mosc. Math. Soc. {\bf 1986} (1988), 3--63.


 \bibitem{P1} V.\;L.\;Popov, {\it Representations with a free module of covariants}, Funkts. Anal. Prilozh.
 {\bf 10} (1976), no. 3, 91--92. (Russian). Engl. transl.: Funct. Anal. Appl. {\bf 10} (1977), 242--244.

 \bibitem{P5} V. L. Popov, {\it Groups, Generators, Syzygies, and Orbits in Invariant Theory}, Translations of Mathematical Monographs, Vol. 100, Amer. Math. Soc., Providence, RI, 1992.

\bibitem{P4} V. L. Popov, {\it Sections in invariant theory}, in: {\it Proceedings of The Sophus Lie Memorial Conference} (Oslo, 1992), Scandinavian University Press, Oslo, 1994, 315--361.

\bibitem{P6}    V. L. Popov, {\it Self-dual algebraic varieties and nilpotent orbits}, in: {\it Proceedings of the International Conference ``Algebra, Arithmetic and Geometry''}, Part II (Mumbai, 2000), Tata Institute of Fundamental Research, Vol. 16, Narosa Publishing House, New Delhi, 2002, pp.\;509--533.


     \bibitem{P2} V.\;L.\;Popov, {\it The cone of Hilbert nullforms}, Trudy Mat. Inst. Steklova {\bf 241} (2003), 192--209 (Russian). Engl. transl.:
     Proc. Steklov Inst. of Math. {\bf 241} (2003), 177--194.

\bibitem{P3} V. L. Popov, {\it  Projective duality and principal nilpotent elements of symmetric pairs}, in: {\it Lie Groups and Invariant Theory}, Amer. Math. Soc. Transl. (2), Vol. 213, Amer. Math. Soc., Providence, RI, 2005, pp.\;215--222.

    \bibitem{PT} V. L. Popov, E. A. Tevelev, {\it Self-dual projective algebraic varieties associated with symmetric spaces}, in: {\it Algebraic Transformation Groups and Algebraic Varieties}, Encycl. Math. Sci.,Vol. 132, Subseries {\it Invariant Theory and Al\-ge\-b\-raic Transformation Groups}, Vol. III, Springer, Berlin, 2004, pp.\;130--167.

         \bibitem{PV} V.\;L.\;Popov, E.\;B.\;Vinberg, {\it Invariant theory}, in: {\it Algebraic Geometry {\rm IV}},
        En\-cyc\-lopaedia of Mathematical Sciences, Vol. 55, Springer-Verlag, Berlin, 1994, pp.\;123--278.

                     \bibitem{VE}
               \`E.\;B.\;Vinberg, A.\;G.\;\`Elashvili, {\it Classification of trivectors of a nine-di\-men\-sional space}, Tr. Semin. Vektorn. Tenzorn. Anal. Prilozh. Geom. Mekh. Fiz.  {\bf 18} (1976), 197--233 (Russian). Engl. transl.: Sel. Math. Sov. {\bf 7} (1988), no. 1, 63--98.

                    \bibitem{VO}  E.\;B.\;Vinberg, A.\;L.\;Onishchik, {\it Seminar on Lie Groups and Algebraic Groups},
     Nauka, Moscow, 1988 (Russian).
     Engl. transl.: A.\;L.\;Onishchik, E.\;B.\;Vinberg, {\it Lie Groups and Algebraic Groups}, Springer-Verlag, Berlin, 1990.

\end{thebibliography}
\end{document}